%
\documentclass[11pt]{amsart}
\usepackage{amsmath, amsthm, amscd, amsfonts}
\allowdisplaybreaks 
\usepackage{tikz}
\usepackage{pstricks,pst-node}
\usepackage[all]{xy}

\makeatletter \oddsidemargin.9375in \evensidemargin \oddsidemargin
\marginparwidth1.9375in \makeatother

\textwidth 13cm \topmargin.0in \textheight 19cm

\newtheorem{theorem}{Theorem}[section]
\newtheorem{lemma}[theorem]{Lemma}
\newtheorem{proposition}[theorem]{Proposition}
\newtheorem{corollary}[theorem]{Corollary}
\theoremstyle{definition}
 
\newtheorem{example}[theorem]{Example}

\theoremstyle{remark}
\newtheorem{remark}[theorem]{Remark}
\numberwithin{equation}{section}

\begin{document}

\title[Modulation Invariant Spaces on LCA Groups ]{Modulation Invariant Spaces  on
  Locally Compact Abelian Groups}

\author[M. Mortazavizadeh, R. Raisi Tousi ]{M. Mortazavizadeh, R. Raisi Tousi*\\
May 21, 2019}

\subjclass[2010]{Primary  47A15 ; Secondary  42B99, 22B99.}

\keywords{locally compact abelian group, modulation invariant space,
range function, modulation metric.}

\begin{abstract}
We define and investigate modulation invariant spaces on a locally compact abelian group $G$ with respect to a closed subgroup of the dual group $\widehat{G}$. Using a range function approach, we establish a characterization of modulation invariant spaces. Finally, we define a metric on the collection of all modulation invariant spaces and study some topological properties of the metric space.
\end{abstract} \maketitle

\section{Introduction and Preliminaries}
\noindent

 For a locally compact abelian (LCA) group  $G$, a translation invariant space is defined to be a closed subspace of $L^2(G)$  that is invariant under translations by elements of a closed subgroup  $\Gamma$ of $G$. Translation invariant spaces in case of $\Gamma$ closed, discrete and cocompact, called shift invariant spaces, have been studied in \cite{  BRz, Cab, KRr, KRs, KRT, RS}, and extended to the case of $\Gamma$ closed and cocompact (but not necessarily discrete) in \cite{BR} (see also \cite{Hel, HelS}). Recently, translation invariant spaces have been generalized in \cite{BHP} to the case when $\Gamma$  is closed (not necessarily discrete or cocompact), see also \cite{I}. Another spaces, which are effective tools in Gabor theory, are spaces invariant under modulations. Due to the important role of the Gabor theory in mathematical analysis and its applications, it is important to study modulation invariant spaces.  Our main result in this paper is a characterization of modulation invariant spaces in terms of range functions. The basic idea is the fact that the image of a modulation invariant subspace of $L^2(G)$ under the Fourier transform is a translation invariant subspace of $L^2(\widehat{G})$. We use this fact to transform modulation invariant spaces to translation invariant spaces and then we follow the ideas in \cite{BR, BHP}.  By transforming $L^2(G)$ into a vector valued space, in such a way that modulations by a closed subgroup of $\widehat{G}$ become multiplications by a nice family of functions,  we characterize modulation invariant spaces in terms of range functions. Another  goal of the paper is to consider the collection of all modulation invariant spaces as a metric space. Using   range functions, we define a metric on the collection of all modulation invariant spaces and we study some topological properties of this metric space,  that is   we show that it is  complete, noncompact, and disconnected. The manuscript is organized as follows. The rest of this section is devoted to   proposing 
  some required    preliminaries related to   translation invariant spaces from \cite{BR, BHP}. Section 2 contains our main results related to modulation invariant spaces. We investigate modulation invariant spaces using a range function approach. We then define and investigate a topology (in fact a metric) on the collection of all modulation invariant spaces on $G$ in Section 3. 
 
Let $(\Omega, m)$ be a $\sigma$- finite measure space and $\mathcal{H}$ be a separable Hilbert space. A range function is a mapping $J: \Omega \longrightarrow \lbrace \textrm{  closed subspaces  of   $\mathcal{H}$ } \rbrace$. We write $P_{J}(\omega)$ for the orthogonal projection of $\mathcal{H}$ onto $J(\omega)$. A range function $J$ is measurable if the mapping $\omega \mapsto \langle P_{J}(\omega)(a) , b \rangle$ is measurable for all $a ,b\in\mathcal{H} $. Consider the space $L^{2}(\Omega, \mathcal{H})$ of all measurable functions $\phi$ from $\Omega$ to $\mathcal{H}$ such that $\Vert \phi \Vert _{2}^{2} = \int_{\Omega} \Vert \phi(\omega)\Vert^{2}_{\mathcal {H}} dm(\omega) <\infty$ with the inner product $\langle \phi , \psi \rangle = \int_{\Omega} \langle \phi(\omega) , \psi(\omega) \rangle_{\mathcal{H}} d m(\omega)$. A subset $\mathcal{D}$ of $L^{\infty}(\Omega) $ is said to be a determinig set for $L^{1}(\Omega) $, if for all $ f\in L^{1}(\Omega)$, $\int_{\Omega} fg dm =0 $ for all $ g \in \mathcal{D}$ implies that $f=0$. A closed subspace $V$ of $L^{2}(\Omega, \mathcal{H})$ is called  multiplicatively invariant  with respect to a determining set $\mathcal{D}$, if for each $\phi \in V$ and $g \in \mathcal{D}$ one has $g\phi \in V$. Bownik and Ross in \cite[Theorem 2.4]{BR} showed that there is a correnpondence between  multiplicatively invariant spaces and measurable range functions as follows.
\begin{proposition} \label{p1.1}
Suppose that $L^{2}(\Omega)$ is separable, so that $L^{2}(\Omega, \mathcal{H})$ is also separable. Then for a closed subapace $V$ of $L^{2}(\Omega, \mathcal{H})$ and a determining set $\mathcal{D}$  for $L^{1}(\Omega) $ the following are equivalent.\\
(1)  $V$ is  multiplicatively invariant with respect to $\mathcal{D}$.\\
(2)  $V$ is  multiplicatively invariant with respect to $L^{\infty}(\Omega)$.\\
(3) There exists a measureble range function $J$ such that 
\begin{equation*}
V=\lbrace \phi \in L^{2}(\Omega , \mathcal{H}) : \phi(\omega) \in J(\omega) \ \text{,} \ \  \text{ a.e. }  \omega \in \Omega \rbrace .
\end{equation*}
Identifying range functions which are equivalent almost everywhere, the correspondence between $\mathcal{D}$- multiplicatively invariant spaces and measurable range functions is one to one and onto. Moreover, there is a countable subset $\mathcal{A}$ of  $L^{2}(\Omega, \mathcal{H})$ such that $V$ is the smallest closed $\mathcal{D}$- multiplicatively invariant space containing $\mathcal{A}$. For any such $\mathcal{A}$ the measurable range function associated with $V$ satisfies
\begin{equation*}
J(\omega) = \overline{span} \lbrace \phi (\omega) : \phi \in \mathcal{A}\rbrace \ \  a.e. \  \omega \in \Omega .  \label{J} 
\end{equation*}
\end{proposition}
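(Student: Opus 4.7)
The plan is to prove the chain of implications $(2) \Rightarrow (1)$, $(1) \Rightarrow (2)$, $(2) \Rightarrow (3)$, $(3) \Rightarrow (2)$, and then deduce the uniqueness clause and the explicit span formula. Two of these are routine: $(2) \Rightarrow (1)$ holds because $\mathcal{D} \subseteq L^{\infty}(\Omega)$, while $(3) \Rightarrow (2)$ is immediate since $\phi(\omega) \in J(\omega)$ forces $(g\phi)(\omega) = g(\omega)\phi(\omega) \in J(\omega)$ for every $g \in L^{\infty}(\Omega)$.

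For $(1) \Rightarrow (2)$, I would exploit the defining property of a determining set. Fix $\phi \in V$, $g \in L^{\infty}(\Omega)$, and $\psi \in V^{\perp}$, and consider $h(\omega) := \langle \phi(\omega), \psi(\omega) \rangle_{\mathcal{H}}$, which lies in $L^{1}(\Omega)$ by Cauchy--Schwarz. For every $g' \in \mathcal{D}$, invariance gives $g'\phi \in V$, hence $\int g'(\omega) h(\omega) \, dm(\omega) = \langle g'\phi, \psi \rangle = 0$; the determining-set property then forces $h = 0$ a.e., so $\langle g\phi, \psi \rangle = \int g h \, dm = 0$. Since $\psi \in V^{\perp}$ was arbitrary, $g\phi \in V$.

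The heart of the argument is $(2) \Rightarrow (3)$. Using separability of $L^{2}(\Omega, \mathcal{H})$, I would fix a countable dense subset $\mathcal{A} = \{\phi_{n}\}_{n \geq 1}$ of $V$ and set $J(\omega) := \overline{\mathrm{span}}\{\phi_{n}(\omega) : n \geq 1\}$. Writing $V_{J} := \{\psi \in L^{2}(\Omega, \mathcal{H}) : \psi(\omega) \in J(\omega) \text{ a.e.}\}$, the identification $V = V_{J}$ is cleanest via orthogonal complements. If $\theta \in V^{\perp}$, then $\int g(\omega) \langle \phi_{n}(\omega), \theta(\omega) \rangle_{\mathcal{H}} \, dm(\omega) = \langle g\phi_{n}, \theta \rangle = 0$ for every $g \in L^{\infty}(\Omega)$ and every $n$; since $\omega \mapsto \langle \phi_{n}(\omega), \theta(\omega) \rangle$ lies in $L^{1}(\Omega)$ and is annihilated against all of $L^{\infty}(\Omega)$, it vanishes a.e., whence $\theta(\omega) \perp J(\omega)$ for almost every $\omega$. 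Conversely, $\theta(\omega) \perp J(\omega)$ a.e.\ implies $\theta \perp g\phi_{n}$ for all $g \in L^{\infty}(\Omega)$ and all $n$, and hence $\theta \perp V$ since $\mathcal{A}$ is dense in $V$ and $V$ is $L^{\infty}$-invariant. Thus $V^{\perp} = V_{J}^{\perp}$ and $V = V_{J}$. Uniqueness of $J$ modulo null sets follows by a measurable selection argument comparing two putative range functions associated to the same $V$, and the explicit span formula for any countable generating $\mathcal{A}$ then follows from uniqueness applied to the range function built from that $\mathcal{A}$.

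The principal technical obstacle, in my judgment, is verifying that $J$ is a measurable range function, that is, that $\omega \mapsto \langle P_{J}(\omega) a, b \rangle_{\mathcal{H}}$ is measurable for every $a, b \in \mathcal{H}$. The natural route is a pointwise Gram--Schmidt procedure applied to $\{\phi_{n}(\omega)\}$, yielding measurable functions $e_{k}(\omega)$ that furnish an orthonormal basis of $J(\omega)$; then $P_{J}(\omega) a = \sum_{k} \langle a, e_{k}(\omega) \rangle_{\mathcal{H}} e_{k}(\omega)$ is manifestly measurable. The delicate point is an enumeration convention to handle the measurable sets on which successive orthogonalized vectors vanish or become linearly dependent, but this is standard bookkeeping once the framework is in place.
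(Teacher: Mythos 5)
The paper does not prove this proposition: it is quoted verbatim from \cite[Theorem 2.4]{BR}, so there is no in-paper argument to compare against. Your proposal is correct and reproduces the standard Helson--Bownik--Ross proof: the trivial inclusions $(2)\Rightarrow(1)$ and $(3)\Rightarrow(2)$, the determining-set/orthocomplement trick for $(1)\Rightarrow(2)$, the dense countable subset with $J(\omega)=\overline{\mathrm{span}}\{\phi_n(\omega)\}$ and the identification $V^{\perp}=V_J^{\perp}$ for $(2)\Rightarrow(3)$, and pointwise Gram--Schmidt for measurability of $J$. The only thin spots are the one-line appeals to uniqueness of $J$ modulo null sets (the standard route is to test the projection onto $V_J$ against functions $\mathbf{1}_E a$ for $a$ in a countable dense subset of $\mathcal{H}$, rather than a measurable selection) and to the span formula for an arbitrary countable generating set, but both are routine once the orthocomplement computation is in place.
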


Now assume that $G$ is a second countable LCA group and $\Gamma$ is a closed subgroup of $G$. Let $\Gamma ^*$ be the annihilator of $\Gamma$ in $\widehat{G}$. Also suppose that $\Omega$ is a measurable section for the quotient $\widehat{G} / \Gamma ^{*}$ and $C$ is a measurable section for the quotient $G / \Gamma$. For $\gamma \in \Gamma$ we denote by $X_{\gamma}$ the associated character on $\widehat{G}$, i.e. $X_{\gamma}(\chi )= \chi(\gamma)$ for  $\chi \in \widehat{G}$. One can see that the set $ \lbrace X_{\gamma} \vert_{\Omega} : \gamma \in \Gamma \rbrace $ is a determining set for $L^{1}(\Omega) $. A closed subspace $ V \subseteq L^{2}(G)$ is called $\Gamma$- translation invariant space, if $T_{\gamma} V \subseteq V $ for all $\gamma \in \Gamma$. We say that $V$ is generated by a countable subset $\mathcal{A}$ of  $L^{2}(G)$, when $V=S^{\Gamma}(\mathcal{A})=\overline{span}\lbrace T_{\gamma}f : f \in \mathcal{A} , \gamma \in \Gamma \rbrace$.
In \cite[Proposition 6.4]{BHP} it is shown  that there exists an isometric isomorphism between $L^{2} (G)$ and $ L^{2}(\Omega , L^{2}(C))$, namely $Z: L^{2} (G) \longrightarrow L^{2}(\Omega , L^{2}(C))$  satisfying 
 \begin{equation} \label{Zak}
 Z(T_{\gamma}\phi )= X_{\gamma} \vert_{\Omega} Z (\phi).
 \end{equation}
  The forthcoming proposition, which is \cite[Theorem 6.5]{BHP}, states that $Z$ turns $\Gamma$- translation invariant  spaces in $L^{2} (G)$ into multiplicatively invariant spaces in $L^{2}(\Omega , L^{2}(C))$ with respect to the determining set  $ \mathcal{D}= \lbrace X_{\gamma} \vert_{\Omega} : \gamma \in \Gamma \rbrace$ and vice versa. It also establishes a characterization of $\Gamma$- translation invariant  spaces in terms of range functions.
\begin{proposition} \label{p1.2u}
Let $V \subseteq L^{2}(G)$ be a closed subspace and $Z$ be as in \eqref{Zak}. Then the following are equivalent.\\
(1)  $V$ is a $\Gamma$- translation invariant  space. \\
(2)  $Z(V)$ is a multiplicatively invariant subspace of $ L^{2}(\Omega , L^{2}(C))$ with respect to the determining set $ \mathcal{D}= \lbrace X_{\gamma} \vert_{\Omega} : \gamma \in \Gamma \rbrace $. \\
(3) There exists a measurable range function $J: \Omega \longrightarrow \lbrace  closed \ subspaces \  of \   L^{2}(C) \rbrace $ such that 
\begin{equation*}
V=\lbrace f \in L^{2}(G) : Z(f)(\omega)\in J(\omega) \  \text{,} \ \  \text{for a.e. }  \omega \in \Omega \rbrace.
\end{equation*}
Identifying range functions which are equivalent almost everywhere, the correspondence between $\Gamma$- translation invariant spaces and measurable range functions is one to one and onto. Moreover if $V=S^{\Gamma}(\mathcal{A})$ for some countable subset $\mathcal{A}$ of $L^{2}(G)$, the measurable range function $J$  associated with $V$ is given by
\begin{equation*}
J(\omega) = \overline{span} \lbrace Z(\phi) (\omega) : \phi \in \mathcal{A}\rbrace \ \  a.e.  \  \omega \in \Omega .  
\end{equation*}
\end{proposition}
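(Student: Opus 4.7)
The plan is to transport Proposition \ref{p1.1} through the isometric isomorphism $Z$ via the intertwining identity \eqref{Zak}. The three equivalences split cleanly into a direct correspondence (1) $\Leftrightarrow$ (2) coming from \eqref{Zak} and a single application of Proposition \ref{p1.1} for (2) $\Leftrightarrow$ (3).

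For (1) $\Leftrightarrow$ (2), I would apply \eqref{Zak}: since $Z(T_\gamma \phi) = X_\gamma|_\Omega \, Z(\phi)$ for every $\gamma \in \Gamma$ and every $\phi \in L^2(G)$, the inclusion $T_\gamma V \subseteq V$ for all $\gamma \in \Gamma$ is equivalent to $X_\gamma|_\Omega \, Z(V) \subseteq Z(V)$ for all $\gamma \in \Gamma$, that is, $Z(V)$ is multiplicatively invariant with respect to $\mathcal{D} = \{X_\gamma|_\Omega : \gamma \in \Gamma\}$. Since $Z$ is an isometric isomorphism, closed subspaces of $L^2(G)$ correspond bijectively to closed subspaces of $L^2(\Omega, L^2(C))$, so the equivalence is established at the level of closed subspaces.

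For (2) $\Leftrightarrow$ (3), I would invoke Proposition \ref{p1.1} with $\mathcal{H} = L^2(C)$ and the determining set $\mathcal{D}$ already identified in the preliminaries. The hypothesis that $L^2(\Omega)$ is separable is supplied by second countability of $G$, hence of $\widehat{G}$ and of the section $\Omega$. Proposition \ref{p1.1} then produces, uniquely up to a.e.\ equivalence, a measurable range function $J$ such that $Z(V) = \{\phi \in L^2(\Omega, L^2(C)) : \phi(\omega) \in J(\omega)\text{ a.e.}\}$; rewriting this in terms of $\phi = Z(f)$ gives condition (3). For the ``moreover'' part, if $V = S^\Gamma(\mathcal{A})$ with $\mathcal{A}$ countable, continuity of $Z$ yields
\[
Z(V) = \overline{\operatorname{span}}\{X_\gamma|_\Omega \, Z(f) : f \in \mathcal{A}, \ \gamma \in \Gamma\},
\]
which is the smallest closed $\mathcal{D}$-multiplicatively invariant subspace containing the countable set $\{Z(f) : f \in \mathcal{A}\}$; the formula $J(\omega) = \overline{\operatorname{span}}\{Z(\phi)(\omega) : \phi \in \mathcal{A}\}$ a.e.\ is then the content of the last clause of Proposition \ref{p1.1}.

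No deep obstacle is expected: this result is essentially a dictionary translation through $Z$. The only care required is to confirm that the two hypotheses of Proposition \ref{p1.1}---separability of $L^2(\Omega)$ and the determining-set property of $\mathcal{D}$---are in place, both of which are already recorded in the preliminaries.
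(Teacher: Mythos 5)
Your argument is correct and is essentially the standard one: the paper itself does not prove Proposition \ref{p1.2u} but cites it as \cite[Theorem 6.5]{BHP}, and both that reference and the paper's own proof of the modulation analogue (Theorem \ref{p1.2}) proceed exactly as you do, by transporting Proposition \ref{p1.1} through the isometry $Z$ via the intertwining identity \eqref{Zak}. No gaps; the only detail worth making explicit is that the bijectivity of the correspondence in the final clause follows by composing the bijection of Proposition \ref{p1.1} with $V \mapsto Z(V)$, which you leave implicit.
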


\section{Modulation Invariant Spaces}

Our goal in this section is a characterization of modulation invariant spaces in terms of range functions. The idea is that  we transfer modulation invariant spaces to translation invariant spaces, and then we  apply  Proposition \ref{p1.2u}   to  the latter spaces, with the aid 
of which we characterize
 modulation invariant spaces in terms of range functions. Let $G$ be an LCA group and $\Lambda$ be a closed subgroup of $\widehat{G}$, which is not necessarily discrete or cocompact. A closed subspace $ W \subseteq L^{2}(G)$ is called $\Lambda$- modulation invariant, if $M_{\lambda} W \subseteq W $ for all $\lambda \in \Lambda$, where $M_{\lambda}$ is the modulation operator defined as $M_{\lambda} : L^{2} (G) \longrightarrow L^{2} (G)$, $M_{\lambda}f(x)=\lambda(x)f(x)$.  We say that $W$ is generated by a countable subset $\mathcal{A}$ of  $L^{2}(G)$, when $W=M^{\Lambda}(\mathcal{A})=\overline{span}\lbrace M_{\lambda}f : f \in \mathcal{A} , \lambda \in \Lambda \rbrace$. When $\mathcal{A} = \lbrace \varphi \rbrace $, $M^{\Lambda}(\varphi)$ is called a principal modulation invariant space. Assume that  $\Lambda^{*}$ is the annihilator of $\Lambda$ in $G$. In addition, suppose that $\Pi$ is a measurable section for the quotient $G / \Lambda ^{*}$ and $D$ is a measurable section for the quotient $\widehat{G} / \Lambda$. For $\lambda \in \Lambda$ we denote by $X_{\lambda}$ the associated character on $G$. One can see that the set $ \mathcal{D}= \lbrace X_{\lambda} \vert_{\Pi} : \lambda \in \Lambda \rbrace $ is a determining set for $L^{1}(\Pi) $. 
 Let $Z: L^2(\widehat{G}) \longrightarrow L^{2}(\Pi , L^{2}(D)) $ be similar to \eqref{Zak} and $\mathcal{F}$ be the Fourier transform on $L^2(G)$. We define an isometric isomorphism as
 \begin{equation} \label{ztild}
  \tilde{Z} : L^2(G) \longrightarrow L^{2}(\Pi , L^{2}(D))  , \  \tilde{Z}:= Z \ o\ \mathcal{F} .
 \end{equation}
In the next theorem, we show that $ \tilde{Z}$  turns $\Lambda$- modulation invariant  spaces in $L^{2} (G)$ into multiplicatively invariant spaces in $L^{2}(\Pi , L^{2}(D))$  and vice versa. Further we establish a charactrization of $\Lambda$- modulation invariant  spaces in terms of range functions. The main idea of the proof is that the Fourier transform maps $\Lambda$- modulation invariant  subspaces of $L^{2}(G)$ to $\Lambda$- translation invariant  subspaces of $L^{2}(\widehat{G})$.
\begin{theorem} \label{p1.2}
Let $W \subseteq L^{2}(G)$ be a closed subapace and $\tilde{Z}$ be as in \eqref{ztild}. Then the following are equivalent.\\
(1)  $W$ is a $\Lambda$- modulation invariant  space. \\
(2)  $\tilde{Z}(W)$ is a multiplicatively invariant subspace of $ L^{2}(\Pi , L^{2}(D))$ with respect to the determining set $ \mathcal{D}= \lbrace X_{\lambda} \vert_{\Pi} : \lambda \in \Lambda \rbrace $. \\
(3) There exists a measurable range function $J: \Pi \longrightarrow \lbrace  closed \ subspaces \  of \   L^{2}(D) \rbrace $ such that 
\begin{equation} \label{mi}
W=\lbrace f \in L^{2}(G) : \tilde{Z}(f)(x)\in J(x)  \text{,} \ \  \text{for a.e. }  x \in \Pi \rbrace .
\end{equation}
Identifying range functions which are equivalent almost everywhere, the correspondence between $\Lambda$- modulation invariant spaces and measurable range functions is one to one and onto. Moreover if $W=M^{\Lambda}(\mathcal{A})$, for some countable subset $\mathcal{A}$ of $L^{2}(G)$, the measurable range function $J$  associated with $W$ is given by
\begin{equation*}
J(x) = \overline{span} \lbrace \tilde{Z}(\phi) (x) : \phi \in \mathcal{A}\rbrace \ \  a.e.  \  x \in \Pi .  
\end{equation*}
\end{theorem}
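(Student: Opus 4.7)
The plan is to reduce the theorem to Proposition \ref{p1.2u} via the Fourier transform. The starting observation is the standard intertwining identity $\mathcal{F}(M_\lambda f) = T_\lambda \mathcal{F}(f)$, which implies that a closed subspace $W \subseteq L^2(G)$ is $\Lambda$-modulation invariant if and only if $\mathcal{F}(W) \subseteq L^2(\widehat{G})$ is $\Lambda$-translation invariant, where $\Lambda$ is now viewed as a closed subgroup of the ambient group $\widehat{G}$.

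Next, I would apply Proposition \ref{p1.2u} verbatim, but with the substitution $(G, \Gamma, \widehat{G}) \mapsto (\widehat{G}, \Lambda, G)$, using that Pontryagin duality gives $\widehat{\widehat{G}} = G$ and that the annihilator of $\Lambda \subseteq \widehat{G}$ in $\widehat{\widehat{G}} = G$ is precisely $\Lambda^*$. Then $\Pi$ serves as the measurable section for the quotient $G/\Lambda^*$ and $D$ as the section for $\widehat{G}/\Lambda$, so the map $Z: L^2(\widehat{G}) \to L^2(\Pi, L^2(D))$ of that proposition obeys $Z(T_\lambda \psi) = X_\lambda|_\Pi Z(\psi)$ for $\lambda \in \Lambda$. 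Composing with $\mathcal{F}$ yields $\tilde{Z}(M_\lambda f) = X_\lambda|_\Pi \tilde{Z}(f)$, exactly the feature needed to transport multiplicative invariance through the isometric isomorphism $\tilde{Z}$.

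With these pieces in place the three equivalences are immediate: (1) is equivalent to $\Lambda$-translation invariance of $\mathcal{F}(W)$, which by Proposition \ref{p1.2u} is equivalent to (2) (since $\tilde{Z}(W) = Z(\mathcal{F}(W))$), which in turn is equivalent to (3) by the range function characterization in the same proposition. The one-to-one correspondence between $\Lambda$-modulation invariant spaces and measurable range functions follows because $\mathcal{F}$ is a bijection between $\Lambda$-modulation invariant subspaces of $L^2(G)$ and $\Lambda$-translation invariant subspaces of $L^2(\widehat{G})$. For the generating formula, I would use $\mathcal{F}(M^\Lambda(\mathcal{A})) = S^\Lambda(\mathcal{F}(\mathcal{A}))$ and then invoke the final clause of Proposition \ref{p1.2u} to conclude $J(x) = \overline{span}\{Z(\widehat{\phi})(x) : \phi \in \mathcal{A}\} = \overline{span}\{\tilde{Z}(\phi)(x) : \phi \in \mathcal{A}\}$.

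The only real subtlety, which is bookkeeping rather than an obstacle, is keeping the Pontryagin duality straight so that the roles of ``subgroup'' and ``annihilator'' swap correctly when passing from $G$ to $\widehat{G}$; once this is set up, everything else is a direct transport of Proposition \ref{p1.2u} through $\mathcal{F}$.
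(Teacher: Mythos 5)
Your proposal is correct and follows essentially the same route as the paper: both use the intertwining identity $\mathcal{F}(M_\lambda f)=T_\lambda\hat f$ to convert $\Lambda$-modulation invariance into $\Lambda$-translation invariance of $\mathcal{F}(W)$ in $L^2(\widehat G)$, and then transport the range-function characterization through $\tilde Z = Z\circ\mathcal{F}$. The only cosmetic difference is that the paper cycles through $(1)\Rightarrow(2)\Rightarrow(3)\Rightarrow(1)$, invoking the abstract multiplicative-invariance result for $(2)\Rightarrow(3)$ and a direct computation for $(3)\Rightarrow(1)$, whereas you cite Proposition \ref{p1.2u} (with the roles of $G$ and $\widehat G$ swapped) for the whole chain; the substance is identical.
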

\begin{proof}
For $(1)\Rightarrow (2)$ assume that $W$ is a $\Lambda$- modulation invariant  space and $\mathcal{F}$ is the Fourier transform on  $L^{2}(G)$. Then $\mathcal{F}(W)$ is clearly a $\Lambda$- translation invariant subspace of $L^2(\widehat{G})$. By \cite[Theorem 6.5]{BHP}, $\tilde{Z}(W)=Z(\mathcal{F}(W))$ is a multiplicatively invariant subspace of $L^{2}(\Pi , L^{2}(D))$ with respect to the determining set $ \mathcal{D}= \lbrace X_{\lambda} \vert_{\Pi} : \lambda \in \Lambda \rbrace $.      
For $(2)\Rightarrow (3)$, suppose that $\tilde{Z}(W)$ is multiplicatively invariant. By Proposition \ref{p1.1},
\begin{equation*}
\tilde{Z}(W) = \lbrace \phi \in L^{2}(\Pi , L^{2}(D)) \ : \ \phi(x) \in J(x) \ \text{,} \ \  \text{ a.e. }  x \in \Pi \rbrace,
\end{equation*}
for some measurable range function $J$. Applying $\tilde{Z} ^{-1}$ we get
\begin{equation*}
W=\lbrace f \in L^{2}(G) : \tilde{Z}(f)(x)\in J(x) \  \text{,} \ \  \text{for a.e. } \  x \in \Pi \rbrace .
\end{equation*}
Finally, for $(3)\Rightarrow (1)$, assume that \eqref{mi} holds for some measurable range function $J$. For $ f \in W$ and $\lambda \in \Lambda$, using \eqref{Zak} we have
\begin{eqnarray*}
\tilde{Z}(M_{\lambda}f)(x) &=& Z ( \mathcal{F} (M_{\lambda}f)) (x) \\
 &=& Z(T_{\lambda} \hat{f})(x)\\
 &=& X_{\lambda} \vert_{\Pi}(x) Z (\hat{f})(x) \\
 &=& X_{\lambda} \vert_{\Pi}(x) \tilde{Z} (f)(x)  \in J(x) \ \ a.e. \ x \in \Pi ,
\end{eqnarray*}
which proves $(1)$. Now let  $W=M^{\Lambda}(\mathcal{A})$ for some countable subset $\mathcal{A}$ of $L^{2}(G)$. Put $ \mathcal{B}= \tilde{Z}(\mathcal{A})$, then $\mathcal{B}$ is a countable subset of $L^{2}(\Pi , L^{2}(D))$. By \eqref{Zak} 
\begin{equation*}
\tilde{Z}(W) = \overline{span} \lbrace X_{\lambda} (x) \phi(x) \ : \ \lambda \in \Lambda \ , \phi \in \mathcal{B} \rbrace .
\end{equation*}
Now by Proposition \ref{p1.1} the range function associated with $\tilde{Z}(W)$ is
\begin{equation*}
J(x)= \overline{span} \lbrace \Phi(x) \ : \ \Phi \in \mathcal{B} \rbrace = \overline{span}\lbrace \tilde{Z}\phi(x) \ :\ \phi \in \mathcal{A} \rbrace .
\end{equation*}
The proof of one to one correspondence between $\Lambda$- modulation invariant spaces and measurable range functions is similar to \cite[Theorem 6.5]{BHP} and is omitted.
\end{proof}
\begin{corollary}
For an LCA group $G$, a closed subspace $W$ of $L^2(G)$ that is invariant under all modulations (i.e. $W$ is $\widehat{G}$- modulation invariant) can be written as 
\begin{equation} \label{ghatin}
W= \lbrace f \in L^2(G) \ :\ supp \ f \ \subseteq F \rbrace ,
\end{equation} 
for some measurable subset $F \subseteq G$. 
\end{corollary}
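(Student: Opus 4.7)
The plan is to specialize Theorem \ref{p1.2} to $\Lambda = \widehat{G}$ and to observe that in this extreme case the range function description degenerates to a support condition. First I would unpack the ingredients: since $\Lambda = \widehat{G}$, the annihilator $\Lambda^{*} = \{0\}$, so $G/\Lambda^{*} \cong G$ and I may take $\Pi = G$ as the measurable section. On the other side, $\widehat{G}/\Lambda$ is trivial, so $D$ is a single point and $L^{2}(D) \cong \mathbb{C}$. Under these identifications, $L^{2}(\Pi, L^{2}(D))$ is just $L^{2}(G)$.

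Next I would identify the transform $\tilde{Z}$ explicitly. The intertwining relation \eqref{Zak} requires $Z(T_{\lambda}\phi)(x) = \lambda(x)\,Z(\phi)(x)$ for every $\lambda \in \widehat{G}$, and a direct computation on nice $\phi$ shows that the inverse Fourier transform $\mathcal{F}^{-1} : L^{2}(\widehat{G}) \to L^{2}(G)$ has exactly this property. Combined with the fact that $Z$ is an isometric isomorphism, this pins $Z$ down as $\mathcal{F}^{-1}$ (up to a harmless unimodular constant that plays no role below), whence $\tilde{Z} = Z \circ \mathcal{F}$ is the identity on $L^{2}(G)$.

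Once $\tilde{Z} = \mathrm{id}$ is in hand, the conclusion is almost automatic. Any measurable range function $J : G \to \{\text{closed subspaces of }\mathbb{C}\}$ can only take the two values $\{0\}$ and $\mathbb{C}$, so $J$ is determined by the measurable set $F := \{x \in G : J(x) = \mathbb{C}\}$ (measurability of $F$ follows from the measurability of $x \mapsto \langle P_{J}(x)\,1,1\rangle$). Substituting into Theorem \ref{p1.2}(3), the condition $\tilde{Z}(f)(x) \in J(x)$ a.e.\ becomes $f(x) = 0$ for a.e.\ $x \notin F$, i.e.\ $\mathrm{supp}\,f \subseteq F$, which is exactly \eqref{ghatin}. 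Conversely, any such $F$ yields a $\widehat{G}$-modulation invariant space because multiplication by $\lambda$ does not enlarge the support.

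The one step that deserves real care is the identification $\tilde{Z} = \mathrm{id}$: one must check that in the degenerate case $\Pi = G$, $L^{2}(D) = \mathbb{C}$, the construction of $Z$ in \cite[Prop.~6.4]{BHP} collapses to the inverse Fourier transform. Every other step is a straightforward bookkeeping translation via the already-established Theorem \ref{p1.2}.
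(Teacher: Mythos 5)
Your argument is correct, but it takes a genuinely different route from the paper's. The paper's proof is two lines: apply the Fourier transform to turn $W$ into a $\widehat{G}$-translation invariant subspace of $L^2(\widehat{G})$ and then cite the Wiener-type result \cite[Remark 6]{BHP} for subspaces invariant under \emph{all} translations. You instead specialize Theorem \ref{p1.2} to $\Lambda=\widehat{G}$ and work out the degenerate case by hand ($\Lambda^*=\{0\}$, so $\Pi=G$; $D$ a point, so $L^{2}(D)\cong\mathbb{C}$; range functions reduced to indicator sets), which in effect reproves the cited remark from the range-function machinery. Your version is more self-contained and makes transparent \emph{why} the range-function description collapses to a support condition; the paper's is shorter but leans on an external citation. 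Two small imprecisions in your write-up, neither of which damages the argument: (i) the intertwining relation \eqref{Zak} together with unitarity pins $Z$ down only up to composition with a unitary in the commutant of the multiplication operators $\{M_{X_{\lambda}} : \lambda\in\widehat{G}\}$ on $L^{2}(G)$, i.e.\ up to multiplication by a unimodular \emph{function} (not merely a constant), and the explicit construction in \cite[Proposition 6.4]{BHP} may also introduce a reflection $x\mapsto -x$ from the Fourier-inversion convention; (ii) consequently $\tilde{Z}$ need not be literally the identity. Since multiplication by a unimodular function and reflection both preserve the condition ``vanishes a.e.\ off a measurable set'' (after replacing $F$ by $-F$ if necessary), the conclusion $W=\{f\in L^{2}(G) : \mathrm{supp}\, f\subseteq F\}$ is unaffected, so these are matters of bookkeeping rather than gaps.
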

\begin{proof}
 In this case $\mathcal{F}(W)$ is a $\widehat{G}$- translation invariant subspace of $L^2(\widehat{G})$. By \cite[Remark 6]{BHP},  $\mathcal{F}(W) = \lbrace  f \in L^2(\widehat{G}) \ : \ supp \ \hat{f} \subseteq E \rbrace  $, for some measurable subset $E \subseteq G$ and \eqref{ghatin} follows.
 \end{proof}
For a countable subset $\mathcal{A} \subseteq L^2(G) $, we can give characterizations of frames and Riesz bases generated by  $\mathcal{A}$ in terms of the operator $\tilde{Z}$ defined in  \eqref{ztild}.
\begin{theorem} \label{fr}
Let $\mathcal{A} \subseteq L^2(G)$ be a countable subset and $J$ be the measurable range function associated with $W= M^{\Lambda}(\mathcal{A})$. Assume that $E^{\Lambda}(\mathcal{A}) := \lbrace M_{\lambda} \phi \ : \ \phi \in \mathcal{A} \rbrace$. The following conditions are equivalent.\\
(1) $E^{\Lambda}(\mathcal{A})$ is a  continuous  frame (continuous Riesz basis) for $W$ with bounds $ 0 < A \leq B < \infty $. \\
(2) The set $\lbrace \tilde{Z} \phi (x) \ : \ \phi \in \mathcal{A} \rbrace$  is a frame (Riesz basis) with bounds $A$ and $B$, for almost every $x \in \Pi$.
\end{theorem}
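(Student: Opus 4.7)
The plan is to reduce the statement to its translation-invariant analogue via the Fourier transform, and then invoke the fiberization of continuous frames and Riesz bases of translates established in \cite{BHP}. The unitarity of $\mathcal{F} \colon L^2(G) \to L^2(\widehat G)$ together with $\mathcal{F} M_\lambda = T_\lambda \mathcal{F}$ yields $\langle f, M_\lambda \phi\rangle = \langle \hat f, T_\lambda \hat\phi\rangle$ for every $f \in W$, $\phi \in \mathcal{A}$, $\lambda \in \Lambda$. Hence $E^\Lambda(\mathcal{A})$ is a continuous frame (respectively Riesz basis) for $W$ with bounds $A, B$ if and only if the family of translates $\{T_\lambda \hat\phi : \phi \in \mathcal{A},\, \lambda \in \Lambda\}$ is a continuous frame (respectively Riesz basis) for $\mathcal{F}(W)$ with the same bounds.

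Next, $\mathcal{F}(W) = S^{\Lambda}(\hat{\mathcal A})$ is a $\Lambda$-translation invariant subspace of $L^2(\widehat G)$, placing us in the setup of Proposition \ref{p1.2u}. The continuous analogue of the classical shift-invariant fiberization, as proved in \cite{BHP}, then asserts that $\{T_\lambda \hat\phi\}$ is a continuous frame (Riesz basis) for $\mathcal{F}(W)$ with bounds $A, B$ if and only if $\{Z(\hat\phi)(x) : \phi \in \mathcal{A}\}$ is a frame (Riesz basis) with the same bounds in the fiber space $J(x) \subseteq L^2(D)$ for almost every $x \in \Pi$. Since $\tilde Z = Z \circ \mathcal F$ by \eqref{ztild}, the fiber vectors $Z(\hat\phi)(x)$ coincide with $\tilde Z(\phi)(x)$; moreover Theorem \ref{p1.2} already identifies the range function of $W$ with $x \mapsto \overline{span}\{\tilde Z(\phi)(x) : \phi \in \mathcal{A}\}$, so condition (2) is exactly the pointwise conclusion.

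The main obstacle will be the fiberization step itself, and in particular the preservation of the optimal bounds $A, B$ when passing from a global inequality to an a.e.\ pointwise one. This rests on the identity $\int_\Lambda |\langle \hat f, T_\lambda \hat\phi\rangle|^2\, d\lambda = \int_\Pi |\langle Z(\hat f)(x), Z(\hat\phi)(x)\rangle|^2\, dx$, which follows by expanding each inner product through the intertwining relation \eqref{Zak} and applying Plancherel on $\Lambda$ via the duality $G/\Lambda^* \cong \widehat\Lambda$ parametrised by $\Pi$. Summing over $\phi \in \mathcal A$ rewrites the global continuous frame inequality for $\hat f$ as an integrated fiberwise inequality on $\Pi$; a standard density and measurability argument, exactly as in the discrete shift-invariant case, then promotes this to a pointwise a.e.\ inequality with the same optimal bounds. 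The Riesz basis equivalence follows from the analogous identity applied to finite linear combinations of the $\tilde Z(\phi)(x)$ in place of $\hat f$.
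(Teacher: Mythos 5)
Your proposal is correct and follows essentially the same route as the paper: transfer the problem to $\mathcal{F}(W)$ using the unitarity of the Fourier transform and the intertwining $\mathcal{F}M_\lambda = T_\lambda\mathcal{F}$, invoke the fiberization of continuous frames and Riesz bases for translation invariant spaces from \cite{BHP} (the paper cites Theorem 6.10 there), and conclude via the identity $Z(\hat\phi)(x) = \tilde{Z}(\phi)(x)$ from \eqref{ztild}. Your final paragraph sketching the Plancherel-type identity behind the fiberization step is extra detail the paper leaves to the citation, but it does not change the argument.
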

\begin{proof}
Using the fact that unitary operators preserve frames and Riesz bases \cite[Section 5.3]{Ole}, we know that $E^{\Lambda}(\mathcal{A})$ is a
continuous
 frame for $ M^{\Lambda}(\mathcal{A})$, if and only if
\begin{equation*}
\mathcal{F}(E^{\Lambda}(\mathcal{A})) = \lbrace T_{\lambda} \hat{\phi} \ : \ \phi \in \mathcal{A} \rbrace
\end{equation*}
 is a   continuous  frame (Riesz basis) for
\begin{equation*}
 \mathcal{F}(M^{\Lambda}(\mathcal{A})) = \overline{span} \lbrace T_{\lambda} \hat{\phi} \ : \ \phi \in \mathcal{A} \rbrace .
\end{equation*}
 Equivalently, for almost every $x \in \Pi$, the set  $\lbrace Z \hat{\phi} (x) \ : \ \phi \in \mathcal{A} \rbrace$ is a frame (Riesz basis) for $J(x)$, where $Z$ is as in \eqref{Zak} (see  \cite[Theorem 6.10]{BHP}). Now by \eqref{ztild},
\begin{equation*}
\lbrace Z \hat{\phi} (x) \ : \ \phi \in \mathcal{A} \rbrace = \lbrace \tilde{Z}{\phi} (x) \ : \ \phi \in \mathcal{A} \rbrace .
\end{equation*}
This completes the proof.
\end{proof}
 The following proposition states that every $\Lambda$- modulation invariant space can be decomposed to mutually orthogonal $\Lambda$- modulation invariant spaces each of which is generated by a single function in $L^2(G)$. The proof is similar to \cite[Theorem 5.3]{BR} and so  is omitted.
\begin{proposition} \label{t2.3}
Let $W$ be a $\Lambda$- modulation invariant subspace of $L^{2}(G)$. Then there exist functions $\phi_{n} \in W$, $n \in \mathbb{N}$ such that\\
(1) The set $\{ M_{\lambda} \phi _{n} \ : \ \lambda \in \Lambda  \}$ is a Paseval frame for $M^{\Lambda}(\phi _{n})$. \\
(2) The space $W$ can be decomposed as an orthogonal sum 
\begin{equation*}
W= \bigoplus _{n \in \mathbb{N}} M^{\Lambda}(\phi _{n}).
\end{equation*}
\end{proposition}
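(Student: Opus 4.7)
The plan is to reduce the proposition to the known decomposition theorem for multiplicatively invariant spaces by transporting everything through the isometry $\tilde{Z}$ of \eqref{ztild}, and then to pull the decomposition back to $L^2(G)$ while verifying that the Parseval frame property survives the transfer. Concretely, by Theorem \ref{p1.2} the image $\tilde{Z}(W) \subseteq L^2(\Pi, L^2(D))$ is a multiplicatively invariant space with respect to $\mathcal{D} = \{X_\lambda|_\Pi : \lambda \in \Lambda\}$, and it corresponds to a measurable range function $J : \Pi \to \{\text{closed subspaces of } L^2(D)\}$. The decomposition for multiplicatively invariant spaces (the analog of \cite[Theorem 5.3]{BR} in the present setting, already invoked in the statement) produces a countable family $\psi_n \in \tilde{Z}(W)$ such that $\tilde{Z}(W) = \bigoplus_{n \in \mathbb{N}} \overline{\mathrm{span}}\{g \psi_n : g \in L^\infty(\Pi)\}$ and such that, for a.e.\ $x \in \Pi$, the vector $\psi_n(x)$ either is a unit vector orthogonal to $\psi_m(x)$ for $m \neq n$ or vanishes.

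The functions witnessing the decomposition are then $\phi_n := \tilde{Z}^{-1}(\psi_n) \in W$. To see that $W = \bigoplus_{n} M^\Lambda(\phi_n)$, I would first note that $\tilde{Z}(M^\Lambda(\phi_n))$ is the multiplicatively invariant space generated by $\psi_n$ (this is immediate from \eqref{Zak} together with the density of trigonometric polynomials in $L^\infty(\Pi)$ in the weak-$*$ sense, which makes $\overline{\mathrm{span}}\{X_\lambda|_\Pi \,\psi_n : \lambda \in \Lambda\}$ coincide with $\overline{\mathrm{span}}\{g \psi_n : g \in L^\infty(\Pi)\}$); since $\tilde{Z}$ is an isometric isomorphism, orthogonality and the direct sum decomposition transfer back unchanged.

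For the Parseval frame statement it is enough, by Theorem \ref{fr} applied with $\mathcal{A} = \{\phi_n\}$, to observe that $\{\tilde{Z}\phi_n(x)\} = \{\psi_n(x)\}$ is a Parseval frame for the range function $J_n(x) = \overline{\mathrm{span}}\{\psi_n(x)\}$ of $M^\Lambda(\phi_n)$ for a.e.\ $x \in \Pi$. This is automatic from the selection property above: $\{\psi_n(x)\}$ is either empty or a single unit vector, hence trivially a Parseval frame (in fact a Parseval Riesz basis) for its linear span.

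The only real obstacle is the existence of the measurable sections $\psi_n$ with the orthonormal-or-zero property; this is precisely the content of the measurable selection argument used in \cite[Theorem 5.3]{BR}, which transfers verbatim to our $(\Pi, L^2(D))$ setting because $L^2(\Pi)$ is separable (so that Proposition \ref{p1.1} applies) and the Gram–Schmidt construction is performed pointwise in $x$ on a measurable basis of $J(x)$. Once this selection is available the remainder of the proof is a bookkeeping exercise via $\tilde{Z}$, which is why the authors omit the details.
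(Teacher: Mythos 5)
Your proposal is correct and follows essentially the route the paper intends: the paper omits the proof precisely because it is ``similar to \cite[Theorem 5.3]{BR}'', and your argument is exactly that transfer --- push $W$ through $\tilde{Z}$ to a multiplicatively invariant space, perform the measurable orthonormal-or-zero selection on the associated range function as in \cite{BR}, and pull the resulting orthogonal decomposition back, with Theorem \ref{fr} converting the pointwise unit-vector-or-zero property into the Parseval frame claim. No further comment is needed.
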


\begin{example}
For a fixed prime number $p$, the field of $p$-adic numbers $\mathbb{Q}_p$  is the completion of rational numbers $x= \sum_{j=m}^{\infty}c_{j}p^j$ for $m \in \mathbb{Z}$ and $c_{j} \in \lbrace 0,1, \dots , p-1\rbrace$ under the $p$-adic norm $\vert  .  \vert _p$ defined as follows (see \cite[Chapter 2]{F}). Every nonzero rational $x$ can be uniquely written as  $ x= \frac{r}{s}p^n$, where $r, s, n \in \mathbb{Z}$ and $p$ does not divide $r$ or $s$. We define the $p$-adic norm of $x$ by $ \vert x \vert _p = p^{-n}$, in addition $\vert 0 \vert _p = 0$. Then $\mathbb{Q}_p$ is an additive LCA group and 
 $\mathbb{Z}_p := \lbrace x \in  \mathbb{Q}_p :  \vert x \vert _p  \leq 1  \rbrace =  \left\lbrace \sum_{j=0}^{\infty} c_j p^j : c_j \in \lbrace 0,1, \dots, p-1\rbrace \right\rbrace $ is a closed, compact, and open subgroup of $\mathbb{Q}_p$. A fundamental domain of $\mathbb{Z}_p$ in $\mathbb{Q}_p$ is 
\begin{equation*}
\Omega = \left\lbrace \sum_{j=m}^{-1} c_j p^j : c_j \in \lbrace 0,1, \dots, p-1\rbrace \right\rbrace .
\end{equation*}
 By Theorem \ref{p1.2}, every $\mathbb{Z}_p$- modulation invariant space of $L^2(\mathbb{Q}_p)$ is of the form
\begin{equation*}
W= \lbrace f \in L^2(\mathbb{Q}_p) \ : \ \tilde{Z}f(x) \in J(x) \ a.e. \ x \in \Omega   \rbrace ,
\end{equation*}
for some measurable range function $J: \Omega \longrightarrow \lbrace \textrm{  closed subspaces  of   $L^2(\Omega)$ } \rbrace$, where $\tilde{Z} : L^2(\mathbb{Q}_p) \longrightarrow L^{2}(\Omega , L^{2}(\Omega))$ is the Zak transform defined as \eqref{ztild}.
\end{example}

\section{The Modulation metric}
In \cite{MRK}, we defined a translation metric on the collection of all translation invariant spaces and studied some topological properties of the metric space. In this section, we define a similar metric for the modulation case. Infact  we introduce and investigate topological properties of a modulation metric $\theta$, a metric on the collection of all modulation invariant subspaces of $L^{2}(G)$. Let $MI(G)$ denote the collection of all modulation invariant subspaces of $L^{2}(G)$. For each $V$ and $W$ in $MI(G)$ define 
\begin{equation} \label{tm}
\theta (V,W) = \inf \lbrace \alpha > 0 : m(\lbrace x  \in \Pi : \Vert P_{J_{V} }(x) - P_{J_{W} }(x) \Vert > \alpha \rbrace) = 0\rbrace,
\end{equation}
where $J_{V}$ and $J_{W}$ are the measurable range functions associated with $V$ and $W$, $P_{J_{V}}(x)$ and  $P_{J_{W}}(x)$, $x \in \Pi $, are the orthogonal projections onto $J_{V} (x)$ and $J_{W}(x)$ respectively, $\Vert. \Vert$ denotes the operator norm, and $m$ is the Haar measure of $G$. In the forthcoming proposition, we show that $\theta$ is a metric on $MI(G)$, which is called modulation metric.  Note that if  $V$ and $W$ are modulation invariant spaces, then $\theta (V,W) \leq \epsilon$ if and only if, $\Vert P_{J_{V} }(x) - P_{J_{W} }(x) \Vert \leq \epsilon$, for a.e. $x \in \Pi$.
\begin{proposition} 
With the notation as above, $\theta $ is a metric on $MI(G)$.
\end{proposition}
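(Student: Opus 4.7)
The plan is to identify $\theta(V,W)$ as the essential supremum over $x \in \Pi$ of the scalar function $x \mapsto \|P_{J_V}(x) - P_{J_W}(x)\|$, and then derive the metric axioms from (i) standard essential-supremum manipulations and (ii) the one-to-one correspondence between $\Lambda$-modulation invariant spaces and measurable range functions furnished by Theorem \ref{p1.2}.

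The first step I would carry out is to verify measurability of $x \mapsto \|P_{J_V}(x) - P_{J_W}(x)\|$, so that the set appearing in \eqref{tm} is genuinely measurable and $\theta$ is well defined. Since $L^2(D)$ is separable, pick a countable dense subset $\{e_n\}_{n\in\mathbb{N}}$ of its closed unit ball. Then
\[
\|P_{J_V}(x) - P_{J_W}(x)\| = \sup_{m,n \in \mathbb{N}} \bigl|\langle (P_{J_V}(x) - P_{J_W}(x))e_m,\, e_n\rangle\bigr|,
\]
and each term on the right is measurable in $x$ by the very definition of a measurable range function. A countable supremum of measurable functions is measurable, so the left-hand side is measurable. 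Moreover, since $\|P-Q\|\leq 1$ for any two orthogonal projections on a Hilbert space, the infimum in \eqref{tm} lies in $[0,1]$, so $\theta(V,W)$ is a finite nonnegative number.

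Nonnegativity and symmetry are immediate from the definition. For the identity of indiscernibles, $\theta(V,V)=0$ is clear; conversely, if $\theta(V,W)=0$ then $P_{J_V}(x)=P_{J_W}(x)$ for almost every $x\in\Pi$, hence $J_V(x)=J_W(x)$ almost everywhere, and the uniqueness part of Theorem \ref{p1.2} (the one-to-one correspondence modulo a.e. equivalence of range functions) forces $V=W$. For the triangle inequality, given any $U\in MI(G)$, the pointwise operator-norm bound
\[
\|P_{J_V}(x) - P_{J_W}(x)\| \leq \|P_{J_V}(x) - P_{J_U}(x)\| + \|P_{J_U}(x) - P_{J_W}(x)\|
\]
implies that whenever $\alpha>\theta(V,U)$ and $\beta>\theta(U,W)$, one has $\|P_{J_V}(x)-P_{J_W}(x)\|\leq \alpha+\beta$ off the union of two null sets; taking infima over such $\alpha,\beta$ yields $\theta(V,W)\leq \theta(V,U)+\theta(U,W)$.

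The only genuine obstacle is bookkeeping: securing measurability of the projection-norm function (resolved by the separability argument above), and invoking the uniqueness statement of Theorem \ref{p1.2} at the precise moment needed to pass from a.e. equality of the pointwise projections back to equality of the underlying modulation invariant spaces. Everything else is a routine essential-supremum calculation.
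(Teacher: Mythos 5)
Your proof is correct and follows essentially the same route as the paper's: nonnegativity and symmetry from the definition, identity of indiscernibles via the a.e.\ identification of range functions from Theorem \ref{p1.2}, and the triangle inequality via the pointwise operator-norm bound followed by passing to essential suprema. Your preliminary check that $x \mapsto \Vert P_{J_V}(x) - P_{J_W}(x) \Vert$ is measurable (via a countable dense subset of the unit ball of $L^2(D)$) is a worthwhile addition that the paper leaves implicit, but it does not change the substance of the argument.
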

\begin{proof}
Positivity of $\theta$ follows from the definition. For $V$ and $W$ in $ MI(G)$, if $\theta (V,W) = 0$, one can find a sequence $(\alpha_{n})$ of positive numbers converging to $0$ and a set $E$ of  measure zero such that $\Vert P_{J_{V}} (x) - P_{J_{W}} (x) \Vert \leq \alpha_{n}$, for all $n \in \mathbb{N}$  and for $ x  \in \Pi $. It follows that $ \Vert P_{J_{V}} (x) - P_{J_{W}} (x) \Vert = 0$ for a.e. $ x \in \Pi$, so the projections onto $J_{W}(x)$ and $J_{V}(x)$ are the same a.e. and hence $V = W$, in the sence of the usual convention that two modulation invariant spaces are equal if the corresponding range functions are equal a.e.
On the other hand, $V = W$ implies that $J_{V}(x) = J_{W}(x)$ for a.e. $x \in  \Pi $, which in turn implies that $\Vert P_{J_{V}} (x) - P_{J_{W}} (x) \Vert > 0 $ only on a set of measure $0$. Hence $\theta(V,W) = 0$.
For the triangle inequality, if $U$, $V$, and $W$ are modulation invariant spaces and $\epsilon > 0$, one can get $M_{1},M_{2} > 0$ such that $M_{1} < \theta(V,U) + \frac{\epsilon}{2}$, $M_{2} < \theta(U,W) +\frac{\epsilon}{2}$, $m(\lbrace x \in  \Pi : \Vert P_{J_{V}} (x) - P_{J_{U}} (x) \Vert > M_{1}\rbrace) = 0$, and  $m(\lbrace x \in  \Pi : \Vert P_{J_{U} }(x) - P_{J_{W}} (x) \Vert > M_{2}\rbrace) = 0$.
Applying the triangle inequality for the norm gives
\begin{equation*}
\Vert P_{J_{V}} (x) - P_{J_{W}} (x) \Vert \leq M_{1} + M_{2} \ \ a. e\ x \in \Pi.
\end{equation*}
It follows that $\theta(V,W) \leq \theta(V,U) + \theta(U,W)$. Finally, it's obvious that $\theta(V,U) = \theta(U, V )$.
\end{proof}
In the sequel, we show that  $MI(G)$ is  a complete, noncompact, and disconnected metric space. First of all we need the following lemma.
\begin{lemma}
Let $(J_{n})$ be a sequence of measurable range functions, and let $(P_{n}(x))$ be the corresponding sequence of orthogonal projections onto $J_{n}$'s. Suppose that $(P_{n}(x))$ converges to the orthogonal projection $P(x)$ in the operator norm for $x \in \Pi $. If $J(x)$ is the range of $P(x)$, then $J$ is a measurable range function.
\end{lemma}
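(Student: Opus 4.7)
The plan is to unpack the definition of measurability of a range function and observe that the hypothesis reduces the problem to a routine measurability-of-pointwise-limits argument. Recall from the preliminaries that $J$ is measurable precisely when $x\mapsto\langle P(x)a,b\rangle$ is a measurable scalar function for every fixed $a,b\in L^2(D)$. So it suffices to verify this measurability for the limit projection.

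First I would fix arbitrary $a,b\in L^2(D)$ and note that by hypothesis each $J_n$ is measurable, so the scalar functions $f_n\colon x\mapsto\langle P_n(x)a,b\rangle$ are measurable on $\Pi$. Next, I would use the elementary estimate
\begin{equation*}
\bigl|\langle P_n(x)a,b\rangle-\langle P(x)a,b\rangle\bigr|
=\bigl|\langle (P_n(x)-P(x))a,b\rangle\bigr|
\leq \|P_n(x)-P(x)\|\,\|a\|\,\|b\|,
\end{equation*}
together with the standing hypothesis that $\|P_n(x)-P(x)\|\to 0$ for every $x\in\Pi$, to conclude that $f_n(x)\to \langle P(x)a,b\rangle$ pointwise on $\Pi$. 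Since the pointwise limit of measurable scalar functions is measurable, the limit function $x\mapsto\langle P(x)a,b\rangle$ is measurable. As $a,b$ were arbitrary, $J$ satisfies the defining condition of a measurable range function.

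It remains to observe that $J(x)$ is indeed a closed subspace of $L^2(D)$ at every $x\in\Pi$; but this is automatic from the hypothesis that $P(x)$ is an orthogonal projection (its range being precisely a closed subspace). Thus $J\colon \Pi\to\{\text{closed subspaces of }L^2(D)\}$ is a well-defined measurable range function, as required.

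There is no real obstacle here: the only potentially delicate point is the interchange of the projection limit with the inner product, and that is handled cleanly by the operator-norm bound above, which upgrades norm convergence of $P_n(x)$ to pointwise convergence of every matrix coefficient. Because of this, the assumption of norm convergence (rather than, say, mere strong or weak operator convergence) is exactly what makes the argument immediate.
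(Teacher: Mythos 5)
Your proof is correct and follows essentially the same route as the paper: both use the operator-norm bound $\vert\langle (P_n(x)-P(x))a,b\rangle\vert \le \Vert P_n(x)-P(x)\Vert\,\Vert a\Vert\,\Vert b\Vert$ (the paper phrases it for the vector-valued functions $x\mapsto P_n(x)f$) to get pointwise convergence, and then invoke the fact that pointwise limits of measurable functions are measurable. The only cosmetic difference is that you argue directly with the scalar matrix coefficients from the paper's definition of a measurable range function, while the paper passes through vector-valued measurability.
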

\begin{proof}
Let $f \in L^{2}(D)$. Setting $F_{n}(\xi) = P_{n}(\xi)f$ and $F(\xi) = P(\xi)f$, we have 
\begin{equation}
\Vert F_{n}(\xi) - F(\xi) \Vert \leq \Vert P_{n}(\xi) - P(\xi) \Vert \Vert f \Vert.
\end{equation}
It now follows that $F(\xi) = \lim F_{n}(\xi)$. Thus $F$ is
the limit of a sequence $(F_{n})$ of vector valued measurable functions and hence is measurable. That is $J$ is measurable.
\end{proof}
\begin{theorem}\label{complete}\label{th2.3}
The space $MI(G)$ is complete in the modulation metric.
\end{theorem}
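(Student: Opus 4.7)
The plan is to take a Cauchy sequence $(V_n)$ in $(MI(G),\theta)$ with associated measurable range functions $J_n$ and projections $P_n(x) = P_{J_n}(x)$, produce a candidate limit projection pointwise a.e. on $\Pi$, use the preceding lemma to conclude that its range is a measurable range function $J$, and then invoke Theorem \ref{p1.2} to promote $J$ to a modulation invariant space $V \in MI(G)$ which is the limit of $(V_n)$.

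First I would unwind the definition of $\theta$. For each pair $m,n$, $\theta(V_m,V_n) \leq \alpha$ means $\|P_m(x)-P_n(x)\| \leq \alpha$ outside a null set $E_{m,n,\alpha}$. Pick, for each $k \in \mathbb{N}$, an index $N_k$ such that $\theta(V_m,V_n) < 1/k$ whenever $m,n \geq N_k$, and let $E$ be the countable union of all the corresponding exceptional null sets. Then $m(E)=0$ and for every $x \in \Pi \setminus E$ the sequence $(P_n(x))$ is Cauchy in the operator norm of $B(L^2(D))$. Since that algebra is norm complete, there is a bounded operator $P(x)$ with $P_n(x) \to P(x)$ in operator norm.

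Next I would verify that $P(x)$ is an orthogonal projection on $\Pi \setminus E$. Self-adjointness passes to the norm limit since each $P_n(x)$ is self-adjoint; idempotency follows from
\begin{equation*}
\|P(x)^2 - P(x)\| \leq \|P(x)^2 - P_n(x)^2\| + \|P_n(x) - P(x)\| \longrightarrow 0,
\end{equation*}
using $\|P_n(x)^2 - P(x)^2\| \leq (\|P_n(x)\|+\|P(x)\|)\|P_n(x)-P(x)\|$ and the uniform bound $\|P_n(x)\| \leq 1$. Extending $P(x)$ arbitrarily (say by $0$) on the null set $E$, I set $J(x) := \mathrm{range}\,P(x)$; the lemma just proved shows $J$ is a measurable range function. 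Let $V \in MI(G)$ be the unique modulation invariant space associated with $J$ by Theorem \ref{p1.2}.

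Finally I would show $\theta(V_n,V) \to 0$. Given $\epsilon > 0$, choose $N$ so that $\theta(V_m,V_n) < \epsilon$ for $m,n \geq N$; then for each $x \in \Pi \setminus E$ and $n \geq N$, letting $m \to \infty$ in $\|P_m(x)-P_n(x)\| < \epsilon$ (valid outside $E$) yields $\|P(x)-P_n(x)\| \leq \epsilon$. Hence $\theta(V,V_n) \leq \epsilon$ for $n \geq N$. The main obstacle is the one already addressed in the second paragraph: the pointwise null sets depend on the pair $(m,n)$, so completeness genuinely hinges on collapsing countably many null sets into a single one and then transferring the norm-completeness of $B(L^2(D))$ to the range-function picture; once $P(x)$ exists pointwise a.e., the measurability of $J$ is handed to us by the lemma and the rest is routine.
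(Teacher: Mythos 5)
Your proposal is correct and follows essentially the same route as the paper: pass from the Cauchy condition in $\theta$ to pointwise norm-Cauchyness of the projections a.e.\ on $\Pi$, obtain a limit projection $P(x)$, use the preceding lemma to get a measurable range function $J$, and invoke Theorem \ref{p1.2} to produce the limit space $W$. You simply make explicit two steps the paper leaves implicit, namely collapsing the pairwise exceptional null sets into a single one and verifying that the norm limit of orthogonal projections is again an orthogonal projection.
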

\begin{proof}
Suppose $(W_{n})$ is Cauchy in $MI(G)$. Then $(P_{J_{W_{n}}}(x))$ is Cauchy in the Banach space $BL(L^{2}(D))$, the space of all bounded linear operators on $L^{2}(D)$. Hence it converges to an orthogonal projection $P(x)$ for a.e. $x \in \Pi $. Let $J(x)$
be the closed subspace of $L^{2}(D)$ associated with the orthogonal projection $P(x)$. Consider the modulation invariant space $W := \lbrace \varphi \in L^{2}(G): \tilde{Z}\varphi(x) \in J(x) \ a.e.\  x \in \Pi \rbrace$, we have $J_{W}(x) = J(x)$ for a.e. $x \in \Pi $, and hence $P_{J_{W}}(x)= P(x)$ for a.e. $x \in \Pi $. Consequently, $(W_{n})$ converges to $W$ in the modulation metric.
\end{proof}
As a consequence of Theorem \ref{th2.3} we have the following corollary. Let $PMI(G)$ denote the collection of all principal modulation invariant subspaces of $L^2(G)$.
 \begin{corollary}
The space $PMI(G)$ is complete in the modulation metric.
\end{corollary}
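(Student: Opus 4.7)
The plan is to derive completeness of $PMI(G)$ from the already-established completeness of $MI(G)$ by showing that $PMI(G)$ is a closed subset of $MI(G)$ in the modulation metric. So I would take a Cauchy sequence $(W_n)$ in $PMI(G)$, use Theorem~\ref{th2.3} to obtain its limit $W \in MI(G)$, and then argue that $W$ is itself principal.

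Writing $W_n = M^{\Lambda}(\phi_n)$, Theorem~\ref{p1.2} gives $J_{W_n}(x) = \overline{\mathrm{span}}\{\tilde{Z}\phi_n(x)\}$, so each $P_{J_{W_n}}(x)$ is a rank-at-most-one orthogonal projection. By the remark following the definition of $\theta$, convergence $W_n \to W$ forces $\Vert P_{J_{W_n}}(x) - P_{J_W}(x) \Vert \to 0$ for a.e.\ $x \in \Pi$. I would then observe that rank-$\le 1$ orthogonal projections form an operator-norm closed class: a nonzero norm limit of projections of the form $e_n \otimes e_n^*$ must itself be of the form $e \otimes e^*$, since applying the identity $\langle u, e_n \rangle e_n \to Pu$ at some $u = Pu_0 \neq 0$ forces $e_n$ to converge (up to phase) to a unit vector. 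Consequently $\dim J_W(x) \le 1$ for a.e.\ $x \in \Pi$.

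What remains is to produce a single $\phi \in L^2(G)$ with $W = M^{\Lambda}(\phi)$. By Proposition~\ref{p1.1} applied to the multiplicatively invariant space $\tilde{Z}(W)$, there exists a countable family $\{\psi_k\}_{k \in \mathbb{N}} \subseteq L^2(G)$ such that $J_W(x) = \overline{\mathrm{span}}\{\tilde{Z}\psi_k(x) : k \in \mathbb{N}\}$ a.e. I would partition $\Pi$ measurably via $E_k = \{x : \tilde{Z}\psi_k(x) \neq 0\} \setminus \bigcup_{j < k} E_j$; because $\dim J_W(x) \le 1$, on each $E_k$ the subspace $J_W(x)$ coincides with $\mathrm{span}\{\tilde{Z}\psi_k(x)\}$, and on $\Pi \setminus \bigcup_k E_k$ it is trivial. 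Choosing scalars $c_k > 0$ with $\sum_k c_k^2 \Vert \psi_k \Vert_2^2 < \infty$, I would set $\Phi(x) = c_k \tilde{Z}\psi_k(x)$ on $E_k$ and $\Phi(x) = 0$ elsewhere, so that $\Phi \in L^2(\Pi, L^2(D))$; the function $\phi := \tilde{Z}^{-1}\Phi \in L^2(G)$ has range function agreeing with $J_W$ a.e., and the uniqueness clause of Theorem~\ref{p1.2} yields $W = M^{\Lambda}(\phi)$.

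The main difficulty lies in this final measurable-selection/summability step: the rank-at-most-one condition makes the pointwise choice of a generator automatic, but assembling these choices across the partition $\{E_k\}$ into a single $L^2(G)$ function requires the scaling constants $c_k$ to ensure membership in $L^2(\Pi, L^2(D))$. Everything else—stability of rank-$\le 1$ projections under operator-norm limits, and the passage from pointwise convergence of projections to range-function identifications—is routine given the machinery in Section~2 and the preceding lemma.
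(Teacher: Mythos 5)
Your proposal is correct and follows the same route as the paper: invoke completeness of $MI(G)$ (Theorem \ref{th2.3}) and then show the limit $W$ is principal by observing that $\dim J_W(x)\le 1$ for a.e.\ $x\in\Pi$, which the paper obtains by citing the fact that two orthogonal projections at operator-norm distance less than $1$ have ranges of equal dimension. You go beyond the paper in two harmless ways: you reprove the norm-closedness of rank-$\le 1$ projections instead of citing Weidmann, and you explicitly carry out the construction of a single generator from the condition $\dim J_W(x)\le 1$ (via the measurable partition $\{E_k\}$ and the summable weights $c_k$) --- a step the paper merely asserts --- while also correctly allowing $\dim J_{W_n}(x)=0$ on a set of positive measure, where the paper imprecisely writes $\dim J_{W_n}(x)=1$.
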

\begin{proof}
Suppose that $(W_{n})$ is a Cauchy sequence in $PMI(G)$. By Theorem \ref{complete}, $(W_{n})$ converges to some $W \in MI(G)$. We need only to show that $W$ has a single generator.
For $0 < \epsilon < 1$, choose $p \in \mathbb{N}$ such that $ \theta(W_{n}, W ) < \epsilon$ for all $n \geq p$. This implies that
$\Vert P_{J_{W_{n}} }(x) - P_{J_{W} }(x) \Vert < \epsilon $ for a.e. $x$,   whenever $ n \geq p$. Hence $\dim J_{W} (x) = \dim J_{W_{n}}(x) = 1$
for a.e. $x$ (\cite[Theorem 4.35]{W}). This proves that $W$ can be generated by a single function, and hence $W \in PMI(G)$.
\end{proof}
Let $FMI(G)$ be the collection of all modulation invariant spaces generated by a fixed number of elements of $L^2(G)$. With the  same proof as Corollary 2.4, one can see the following corollary.
 \begin{corollary}
The collection $FMI(G)$ is complete in the modulation metric.
\end{corollary}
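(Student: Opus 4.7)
The plan is to imitate the proof of Corollary 2.4 verbatim, with the dimension bound $1$ replaced by the fixed integer $k$ of generators. Fix $k \in \mathbb{N}$ so that $FMI(G)$ consists of modulation invariant spaces generated by $k$ elements, and let $(W_{n})$ be a Cauchy sequence in $FMI(G)$. By Theorem \ref{complete}, $(W_{n})$ converges in the modulation metric to some $W \in MI(G)$. Everything then reduces to showing that $W$ admits $k$ generators.

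For the dimension bound I would proceed exactly as in Corollary 2.4. Choose $0 < \epsilon < 1$ and $p \in \mathbb{N}$ with $\theta(W_{n}, W) < \epsilon$ for all $n \geq p$. Then $\Vert P_{J_{W_{n}}}(x) - P_{J_{W}}(x) \Vert < \epsilon < 1$ for a.e.\ $x \in \Pi$, so by \cite[Theorem 4.35]{W} the fibres must have equal dimension: $\dim J_{W}(x) = \dim J_{W_{n}}(x)$ a.e. Since each $W_{n} = M^{\Lambda}(\phi_{1}^{n}, \ldots, \phi_{k}^{n})$, the ``moreover'' part of Theorem \ref{p1.2} gives $J_{W_{n}}(x) = \overline{span}\{\tilde{Z}\phi_{i}^{n}(x) : 1 \leq i \leq k\}$, hence $\dim J_{W_{n}}(x) \leq k$ a.e., and therefore $\dim J_{W}(x) \leq k$ a.e.

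To finish, I would show that any modulation invariant space whose associated range function has dimension at most $k$ a.e.\ can be generated by $k$ functions. This is a standard measurable selection step: partition $\Pi$ into the measurable sets $\Pi_{j} = \{x \in \Pi : \dim J_{W}(x) = j\}$ for $0 \leq j \leq k$, and on each $\Pi_{j}$ choose measurable orthonormal sections $e_{1}(x), \ldots, e_{j}(x)$ of the field $x \mapsto J_{W}(x)$. Padding with zero vectors produces $k$ elements $\psi_{1}, \ldots, \psi_{k}$ of $L^{2}(\Pi, L^{2}(D))$ whose pointwise spans equal $J_{W}(x)$ a.e. Setting $\phi_{i} := \tilde{Z}^{-1}(\psi_{i})$ and invoking the one-to-one correspondence in Theorem \ref{p1.2} shows $W = M^{\Lambda}(\phi_{1}, \ldots, \phi_{k})$, so $W \in FMI(G)$.

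The main obstacle is this last step, the measurable selection of orthonormal sections of a range function of bounded dimension. For $k = 1$, which is the principal case of Corollary 2.4, this is essentially immediate because only a single nonzero measurable section is required. For arbitrary $k$ one appeals to standard results on measurable fields of Hilbert spaces (essentially a Gram--Schmidt procedure applied to a countable spanning family provided by the last clause of Theorem \ref{p1.2}); the paper treats this as routine by referring back to Corollary 2.4, but it is the one genuinely non-trivial ingredient beyond the $PMI$ proof.
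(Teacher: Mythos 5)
Your proposal is correct and follows the route the paper intends: the paper offers no written proof here, saying only that the argument of Corollary 2.4 carries over, and your first two paragraphs are exactly that argument with $1$ replaced by $k$ (giving $\dim J_W(x)\leq k$ a.e.\ via \cite[Theorem 4.35]{W}). Where you go beyond the paper is in recognizing that the implication ``$\dim J_W(x)\leq k$ a.e.\ $\Rightarrow$ $W$ has $k$ generators'' is not literally contained in the $PMI$ proof and requires a measurable selection of spanning sections; this is the right diagnosis, and your Gram--Schmidt-on-a-countable-spanning-family sketch is the standard way to supply it. One small repair to your selection step: orthonormal sections have pointwise norm $1$, so if $m(\Pi)=\infty$ the padded functions $\psi_i$ need not lie in $L^{2}(\Pi,L^{2}(D))$; you should multiply by a strictly positive measurable weight $w$ with $\int_\Pi w^2\,dm<\infty$ (which does not change the pointwise spans) before applying $\tilde{Z}^{-1}$ and the bijection of Theorem \ref{p1.2}. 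Alternatively, the same conclusion can be extracted from Proposition \ref{t2.3}: the orthogonal decomposition $W=\bigoplus_n M^{\Lambda}(\phi_n)$ there is built so that the fiber dimensions add, so $\dim J_W(x)\leq k$ a.e.\ forces all but $k$ of the summands to vanish, avoiding an explicit selection argument.
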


Now we show that $MI(G)$ is not  compact.
\begin{proposition}
The space $MI(G)$ is not compact in the modulation metric topology.
\end{proposition}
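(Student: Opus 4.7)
The plan is to show that $MI(G)$ fails to be sequentially compact by exhibiting a sequence $(W_n)_{n \in \mathbb{N}}$ in $MI(G)$ whose pairwise modulation distances are bounded below by a fixed positive constant. No such sequence can have a Cauchy subsequence, hence no subsequence converges, so $MI(G)$ is not compact.

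I would build such a sequence directly via the range-function correspondence provided by Theorem \ref{p1.2}. The underlying geometric fact I will exploit is that for two closed subspaces $H_1, H_2$ of a Hilbert space with $H_1 \subsetneq H_2$ (or $H_1 \perp H_2$, both nonzero), the orthogonal projections satisfy $\Vert P_{H_1} - P_{H_2} \Vert = 1$. Assume first that $L^{2}(D)$ is infinite-dimensional, fix an orthonormal basis $\{e_k\}_{k \in \mathbb{N}}$ of $L^2(D)$, and set $H_n := \overline{span}\{e_k : k \geq n\}$. The constant maps $J_n(x) := H_n$ are trivially measurable range functions and, via Theorem \ref{p1.2}, correspond to modulation invariant spaces $W_n \in MI(G)$. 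For $n < m$, the unit vector $e_n$ lies in $H_n \setminus H_m$, so $(P_{H_n} - P_{H_m})e_n = e_n$, giving $\Vert P_{J_n}(x) - P_{J_m}(x) \Vert = 1$ identically in $x \in \Pi$, and therefore $\theta(W_n,W_m) = 1$.

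When instead $L^{2}(D)$ is finite-dimensional, the infinite-dimensionality of $L^{2}(G) \cong L^{2}(\Pi, L^{2}(D))$ forces $L^{2}(\Pi)$ to be infinite-dimensional, so $\Pi$ admits a sequence $(A_n)$ of pairwise disjoint measurable sets of positive Haar measure. Fix a nonzero closed subspace $H \subseteq L^{2}(D)$ and set $J_n(x) := H$ for $x \in A_n$ and $J_n(x) := \{0\}$ otherwise; these are measurable step-valued range functions, and on the positive-measure set $A_n$ one computes $\Vert P_{J_n}(x) - P_{J_m}(x) \Vert = \Vert P_H \Vert = 1$ for $m \neq n$, so once again $\theta(W_n,W_m) = 1$.

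The main obstacle is bookkeeping rather than substance: I must verify measurability of the range functions (immediate for constant or step-valued functions) and handle the mild case split to ensure the construction is available in whatever generality the paper assumes for $G$ and $\Lambda$. Once the sequence $(W_n)$ with pairwise distances equal to $1$ is in hand, noncompactness follows at once.
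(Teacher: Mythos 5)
Your proof is correct, and it reaches the same key quantitative fact as the paper --- a sequence in $MI(G)$ whose pairwise modulation distances all equal $1$, obtained from the norm identity $\Vert P_{H_1}-P_{H_2}\Vert =1$ for nested (or orthogonal) nonzero closed subspaces --- but it builds that sequence from the opposite end of the correspondence. The paper works on the $L^2(G)$ side: it fixes a countable basis $\lbrace \varphi_1,\varphi_2,\dots\rbrace$ of $L^2(G)$, sets $W_m=M^{\Lambda}(\varphi_1,\dots,\varphi_m)$, asserts that the $W_m$ are strictly increasing, and concludes $\theta(W_m,W_{m+1})=1$, then invokes failure of total boundedness. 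You instead construct the measurable range functions directly (constant range functions $J_n(x)=\overline{span}\lbrace e_k : k\geq n\rbrace$ when $L^2(D)$ is infinite dimensional, step-valued ones supported on disjoint positive-measure subsets of $\Pi$ otherwise) and pull back through Theorem \ref{p1.2}, then argue via sequential compactness. The two routes are equivalent in spirit, but yours buys something concrete: strictness of the inclusions is guaranteed by construction, whereas the paper's claim that $W_m$ is properly contained in $W_{m+1}$ is not automatic (if, say, $\Lambda=\widehat{G}$ and $\varphi_1$ is nowhere vanishing, then $W_1=L^2(G)$ already, and one must reorder or reselect generators). The price you pay is the case split on $\dim L^2(D)$ and the (routine) measurability checks, neither of which is an obstacle. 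One cosmetic point: you only need the essential supremum of $\Vert P_{J_n}(x)-P_{J_m}(x)\Vert$ to equal $1$, so positivity of the measure of the set where the fibers differ suffices; your constructions deliver this on all of $\Pi$ or on $A_n$, respectively, so the conclusion $\theta(W_n,W_m)=1$ stands.
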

\begin{proof}
Using \cite[Theorem 45.1]{Mun}, it is enough to show that $MI(G)$ is not totally bounded in the modulation metric. First
choose a countable basis $\lbrace \varphi_{1}, \varphi_{2},\dots \rbrace$ for $L^{2}(G)$. Set $W_{m} = M^{\Lambda}(\mathcal{A}_{m})$, where $\mathcal{A}_{m} = \lbrace \varphi_{1}, \varphi_{2},\dots, \varphi_{m} \rbrace$. Then $W_{m} \subset W_{m+1}$ for any $m$, and hence $\Vert P_{J_{W_{m}}}(x) - P_{J_{W_{m+1}} }(x) \Vert = 1$ for all $x \in \Pi$ (\cite[Theoram 4.30]{W}).
That is $\theta(W_{m}, W_{m+1}) = 1$ for all $m$. Hence for $\epsilon=\frac{1}{2}$, no finite collection of $\epsilon$-balls can contain all $V_{m}$'s.
\end{proof}

 In the next theorem we show that the metric space $MI(G)$ is disconnected.
 \begin{theorem}
The space $MI(G)$ is disconnected in the modulation metric.
\end{theorem}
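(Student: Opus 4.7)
The plan is to exhibit a nonempty proper clopen subset of $MI(G)$, from which disconnectedness follows at once. The natural candidate is the singleton $\{\{0\}\}$, where $\{0\}$ denotes the trivial modulation invariant subspace of $L^{2}(G)$; by Theorem \ref{p1.2} its associated range function is $J_{\{0\}}(x)=\{0\}$ for a.e. $x\in\Pi$, so $P_{J_{\{0\}}}(x)=0$ a.e.

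First I would compute $\theta(\{0\},W)$ for an arbitrary $W\in MI(G)$ with $W\neq\{0\}$. By the one to one correspondence in Theorem \ref{p1.2}, $W\neq\{0\}$ forces $J_{W}(x)\neq\{0\}$ on some set $E\subseteq\Pi$ of positive measure. For $x\in E$ the projection $P_{J_{W}}(x)$ is a nonzero orthogonal projection, hence has operator norm equal to $1$. Thus
\[
\|P_{J_{\{0\}}}(x)-P_{J_{W}}(x)\|=\|P_{J_{W}}(x)\|=1
\]
on the positive measure set $E$, while this norm is always bounded by $1$ off $E$. The definition \eqref{tm} of $\theta$ then yields $\theta(\{0\},W)=1$.

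Finally I would assemble the topological conclusion. The computation above identifies the open ball $B(\{0\},1/2)=\{W\in MI(G):\theta(W,\{0\})<1/2\}$ with $\{\{0\}\}$, so $\{\{0\}\}$ is open. Conversely, for any $W\in MI(G)$ with $W\neq\{0\}$ the triangle inequality prevents $\{0\}$ from lying in $B(W,1/2)$ (because $\theta(\{0\},W)=1$), so $MI(G)\setminus\{\{0\}\}$ is open and $\{\{0\}\}$ is also closed. Since $L^{2}(G)\in MI(G)$ supplies an element distinct from $\{0\}$, the subset $\{\{0\}\}$ is a proper nonempty clopen set and $MI(G)$ is disconnected. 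I do not expect a serious obstacle here; the only nonroutine input is the standard fact that every nonzero orthogonal projection has operator norm exactly $1$, which is the same ingredient already used in the noncompactness argument.
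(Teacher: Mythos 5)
Your proof is correct, but it takes a genuinely different route from the paper's. The paper exhibits $PMI(G)$, the collection of principal modulation invariant spaces, as the proper clopen subset: closedness is deduced from the completeness of $PMI(G)$ (its Corollary on principal spaces), and openness from the cited result of Weidmann that two orthogonal projections at operator-norm distance less than $1$ have ranges of equal dimension, so that any $V$ with $\theta(W,V)<\tfrac12$ for $W$ principal is again principal. You instead isolate the single point $\{0\}$: since $P_{J_{\{0\}}}(x)=0$ a.e.\ and any nonzero orthogonal projection has norm $1$, you get $\theta(\{0\},W)=1$ for every $W\neq\{0\}$, whence $\{\{0\}\}$ is clopen and proper (as $L^2(G)\in MI(G)$). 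Your argument is more elementary and self-contained: it needs neither the completeness corollary nor the external dimension-comparison theorem, and it sidesteps some delicate points in the paper's version (one must still verify that $PMI(G)$ is a \emph{proper} subset of $MI(G)$, and that equality of fiber dimensions a.e.\ with a principal space really forces membership in $PMI(G)$, given that a principal space may have zero-dimensional fibers on a set of positive measure). What the paper's approach buys in exchange is a stronger structural statement --- that the natural stratification of $MI(G)$ by number of generators is itself by clopen pieces --- whereas your argument only detects that $\{0\}$ is an isolated point. Both are valid proofs of the stated theorem.
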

\begin{proof}
It is enough to show that $MI(G)$ has an open and closed proper subset. That $PMI(G)$ is closed follows from Corollary 2.4. Now we show that it is open. Let $W \in PMI(G)$; put $r=\frac{1}{2}$. We show that
$B_{r}(V) \subseteq PMI(G)$, where $B_{r}(W)$ is an open ball with center $W$ and radius $r$.  Let $V \in B_{r}(V)$; then $\theta(W,V)< \frac{1}{2}$, and hence $\dim J_{W}(x)=\dim J_{V}(x)$ for a.e. $x \in \Pi$ (\cite[Theorem 4.35]{W}). Hence  $V\in PMI(G)$. Since $W$ is arbitrary, then $PMI(G)$ is  an open subspace of $MI(G)$. That is $MI(G)$ is disconnected. 
\end{proof}

\begin{remark}
In the case that the closed subgroup $\Lambda$ of $\widehat{G}$ is also cocompact, our results in this paper can be phrased in terms of the so called  fiberization map $\mathcal{T} : L^2(\widehat{G}) \longrightarrow L^2(\Pi , l^2(\Lambda ^{*})), \mathcal{T} f (x) = \lbrace \hat{f}(x+k)\rbrace _{k \in \Lambda ^*}$, instead of the Zak transform $Z$. 
In this case, Theorem \ref{p1.2} says that a closed subspace $W$ of $L^2(G)$ is $\Lambda$- modulation invariant if and only if $W=\lbrace f \in L^{2}(G) : \tilde{\mathcal{T}}(f)(x)\in J(x)  \text{,} \ \  \text{for a.e. }  x \in \Pi \rbrace$, for some measurable range function 
\begin{equation*}
J: \Pi \longrightarrow \lbrace  closed \ subspaces \  of \   L^{2}(D) \rbrace,
\end{equation*}
in which $\tilde{\mathcal{T}} : L^2(G) \longrightarrow L^2(\Pi , l^2(\Lambda ^{*})), \tilde{\mathcal{T}} := \mathcal{T} \ o \ \mathcal{F}$, and $\mathcal{F}$ is the Fourier transform on $L^2(G)$.
Moreover if $W=M^{\Lambda}(\mathcal{A})$, for some countable subset $\mathcal{A}$ of $L^{2}(G)$, the measurable range function $J$ is given by
\begin{equation*}
J(x) = \overline{span} \lbrace \tilde{\mathcal{T}}(\phi) (x) : \phi \in \mathcal{A}\rbrace \ \  a.e.  \  x \in \Pi ,  
\end{equation*}
 (compare with \cite[Theorem 3.8]{BR}).
\end{remark}

\begin{example}
We give an example of a sequence of $\mathbb{Z}$- modulation invariant subspaces of $L^2(\mathbb{R})$ converging in the modulation metric. Define $ \phi := \textbf{1} _{(0,1)} \in L^2(\mathbb{R})$, and suppose $(\phi _n)$ is the sequence defined by $\phi_{n} (x) = \frac{n+1}{n} \textbf{1}_{(0,1)}(x)$. Let $W= M^{\mathbb{Z}} (\phi)$ and $W_{n} = M^{\mathbb{Z}}(\phi _{n})$. A direct calculation from \cite[Theorem 3.8]{BR} shows that $J_{V}(\xi) = span \lbrace e_0 \rbrace $ and $J_{V_{n}}(\xi) = span \lbrace \frac{n+1}{n} e_0 \rbrace $, where $(e_k)$ is the standard basis of $l^2(\Lambda ^{*})$.  So the projections onto $J_{W_{n}}(\xi)$'s and $J_{W}(\xi)$ are the same and we can conclude that $(W_{n})$ converges to $W$ in the modulation metric.
\end{example}

\bibliographystyle{amsplain}

\begin{thebibliography}{10}

\bibitem{BAL} M. S. Balasubramani, V. K. harish,
\textit{A new metric in the study of shift invariant subspace of $L^{2}(\Bbb R^{n})$}, Methods Funct. Anal. Topology, \textbf{18(3)} (2012), 214--219.

\bibitem{BHP} D. Barbieri, E. Hern´andez, V. Paternostro, \textit{The Zak transform and the structure of spaces invariant by the action of an LCA group}, J. J. Funct. Anal.\textbf{269(5)} (2015), 1327--1358.

\bibitem{BR} M. Bownik, K. Ross, \textit{The structure of translation-invariant spaces on locally compact abelian groups.}, J. Fourier Anal. Appl. \textbf{21(4)} (2015), 849--884.

\bibitem{BRz} M. Bownik and Z. Rzeszotnik, \textit{The spectral function of shift-invariant spaces.}, Mich. Math. J. \textbf{51} (2003), 387–414

\bibitem{Cab} C. Cabrelli, V. Paternostro, \textit{Shift-invariant spaces on LCA groups}, J. Funct. Anal. \textbf{258} (2010), 2034--2059.





 



\bibitem {Ole} O. Christensen, \textit{An Introduction to Frames and Riesz Bases (2nd edition)},  Applied and Numerical Harmonic Analysis, Birkhauser, 2016.

\bibitem {F} G. B. Folland, \textit{A Course in Abstract Harmonic Analysis},  CRC
Press, 1995.

\bibitem {Hel} H. Helson, \textit{Lectures on Invariant Subspaces},  Academic Press, New York, 1964.
Press 1995.

\bibitem {HelS} H. Helson, \textit{The Spectral Theorem}, Lecture Notes in Mathematics Springer-Verlag, New York 1986.


\bibitem {H} E. Hewitt, K. A. Ross, \textit{Abstract harmonic analysis}, Vol 1,
Springer-Verlag, 1963.


\bibitem{I} J. W. Iverson, \textit{Subspaces of $L^2 (G)$ invariant under translation by an abelian subgroup}, J. Funct. Anal. \textbf{269} (2015), 865--913.



\bibitem{KRr} R. A. Kamyabi Gol, R. Raisi Tousi, \textit{A range function approach to shift-invariant spaces on locally compact abelian groups.}, Int. J. Wavelets Multiresolut. Inf. Process.\textbf{8} (2010), 49--59.

\bibitem{KRs} R. A. Kamyabi Gol, R. Raisi Tousi, \textit{The structure of shift-invariant spaces on a locally compact abelian group.}, J. Math. Anal. Appl.\textbf{340} (2008), 219--225.



\bibitem{KRT} R. A. Kamyabi Gol, R. Raisi Tousi, \textit{Shift invariant spaces and shift preserving operators on locally compact abelian groups.}, Iran. J. Math. Sci. Inform. \textbf{2(6)} (2011), 21--32.


\bibitem{MRK} M. Mortazavizadeh, R. Raisi Tousi, R. A. Kamyabi Gol, \textit{On a metric on translation invariant spaces}, Iran. J. Math. Sci. Inform, to appear.


\bibitem{Mun}
J. R. Munkres, {\it Topology: a First Course, Second Edition}, Prentice Hall, 2000.


\bibitem{RS} R. Radha, N. Shravan kumar, \textit{Shift invariant subspaces on compact groups.}, Bull. Sci. Math. \textbf{137(4)} (2013), 485-497.





 \bibitem{W}
 J. Weidmann, \textit{Linear Operator in Hilbert Space}, Springer-Verlag, New York-Berlin, 1980.






\end{thebibliography}

\end{document}